 \def\NZQ{\Bbb}               
 \def\ZZ{{\NZQ Z}}
  \def\MS{{\mathcal S}}
 \def\ab{{\bold a}}
 \def\xb{{\bold x}}
 \def\cb{{\bold c}}
 \def\opn#1#2{\def#1{\operatorname{#2}}} 
 \opn\chara{char} \opn\length{\ell} \opn\pd{pd} \opn\rk{rk}
 \opn\projdim{proj\,dim} \opn\injdim{inj\,dim} \opn\rank{rank}
 \opn\depth{depth} \opn\grade{grade} \opn\height{height}
 \opn\embdim{emb\,dim} \opn\codim{codim}
 \opn\Tr{Tr} \opn\bigrank{big\,rank}
 \opn\superheight{superheight}\opn\lcm{lcm}
 \opn\trdeg{tr\,deg}
 \opn\reg{reg} \opn\lreg{lreg} \opn\ini{in} \opn\lpd{lpd}
 \opn\size{size} \opn\sdepth{sdepth}
 \opn\link{link}\opn\fdepth{fdepth}\opn\lex{lex}
 \opn\div{div} \opn\Div{Div} \opn\cl{cl} \opn\Cl{Cl}
 \opn\Spec{Spec} \opn\Supp{Supp} \opn\supp{supp} \opn\Sing{Sing}
 \opn\Ass{Ass} \opn\Min{Min}\opn\Mon{Mon}
 \opn\Ann{Ann} \opn\Rad{Rad} \opn\Soc{Soc}
 \opn\Im{Im} \opn\Ker{Ker} \opn\Coker{Coker} \opn\Am{Am}
 \opn\Hom{Hom} \opn\Tor{Tor} \opn\Ext{Ext} \opn\End{End}
 \opn\Aut{Aut} \opn\id{id}
 \opn\nat{nat}
 \opn\pff{pf}
 \opn\Pf{Pf} \opn\GL{GL} \opn\SL{SL} \opn\mod{mod} \opn\ord{ord}
 \opn\Gin{Gin} \opn\Hilb{Hilb}\opn\sort{sort}
 \opn\aff{aff} \opn
\opn\relint{relint} \opn\st{st}
 \opn\lk{lk} \opn\cn{cn} \opn\core{core} \opn\vol{vol}
 \opn\link{link} \opn\star{star}\opn\lex{lex}\opn\set{set}
 \opn\gr{gr}
 \def\pot#1#2{#1[\kern-0.28ex[#2]\kern-0.28ex]}
 \opn\dirlim{\underrightarrow{\lim}}
 \opn\inivlim{\underleftarrow{\lim}}
 \let\dirsum=\oplus
 \let\tensor=\otimes
 \let\iso=\cong
 \let\Dirsum=\bigoplus
 \let\to=\rightarrow
 \def\Implies{\ifmmode\Longrightarrow \else
         \unskip${}\Longrightarrow{}$\ignorespaces\fi}
 \def\implies{\ifmmode\Rightarrow \else
         \unskip${}\Rightarrow{}$\ignorespaces\fi}
 \def\iff{\ifmmode\Longleftrightarrow \else
         \unskip${}\Longleftrightarrow{}$\ignorespaces\fi}
 \newtheorem{Theorem}{Theorem}[section]
 \newtheorem{Lemma}[Theorem]{Lemma}
 \newtheorem{Corollary}[Theorem]{Corollary}
 \newtheorem{Proposition}[Theorem]{Proposition}
 \newtheorem{Remark}[Theorem]{Remark}
 \let\epsilon\varepsilon
 \let\kappa=\varkappa
 \def\qed{\ifhmode\textqed\fi
       \ifmmode\ifinner\quad\qedsymbol\else\dispqed\fi\fi}
 \def\textqed{\unskip\nobreak\penalty50
        \hskip2em\hbox{}\nobreak\hfil\qedsymbol
        \parfillskip=0pt \finalhyphendemerits=0}
 \def\dispqed{\rlap{\qquad\qedsymbol}}
 \opn\dis{dis}
 \def\pnt{{\raise0.5mm\hbox{\large\bf.}}}
 \opn\Lex{Lex}
\begin{document}

 \title {Polarization of Koszul cycles with applications to powers of edge ideals of whisker graphs}

 \author {J\"urgen Herzog, Takayuki Hibi and Ayesha Asloob Qureshi}


\address{J\"urgen Herzog, Fachbereich Mathematik, Universit\"at Duisburg-Essen, Campus Essen, 45117
Essen, Germany} \email{juergen.herzog@uni-essen.de}

\address{Takayuki Hibi, Department of Pure and Applied Mathematics, Graduate School of Information Science and Technology,
Osaka University, Toyonaka, Osaka 560-0043, Japan}
\email{hibi@math.sci.osaka-u.ac.jp}

\address{Ayesha Asloob Qureshi, The Abdus Salam International Center of Theoretical Physics, Trieste, Italy} \email{ayesqi@gmail.com}



 \begin{abstract}
In this paper, we introduced the polarization of Koszul cycles and use it to study the depth function of powers of edge ideals of whisker graphs.
 \end{abstract}

\subjclass[2010]{13C13, 13A30, 13F99,  05E40}
\keywords{Polarization, Koszul complex, whisker graphs,  edge ideals }

 \maketitle

 \section*{Introduction}

 Polarization is a technique to deform an arbitrary monomial ideal $I$ in a polynomial ring $S$ into a squarefree monomial ideal $I^\wp$ in a larger polynomial ring $S^\wp$ such that $S/I$ is a quotient of $S^\wp/ I^\wp$ modulo a regular sequence of linear forms. The polarized ideal $I^\wp$ has a nice property that it has the same graded Betti numbers as $I$. Therefore, many questions regarding monomial ideals can be reduced to the study of squarefree monomial ideals. The fact that $I$ and $I^\wp$ has same graded Betti numbers implies that the corresponding Koszul homology modules of the ideal and its polarization have the same vector-space dimension. Therefore, it is natural to ask whether cycles whose homology classes form a basis of the Koszul homology of $I$ can be naturally lifted to cycles representing a basis for the Koszul homology of $I^\wp$. In Theorem~\ref{main}, it is shown that this is indeed the case.

 In his book \cite[Proposition 6.3.2]{V}, Villarreal uses polarization to give a simple proof of the fact that the edge ideal of a whisker graph is Cohen-Macaulay. Given a finite simple graph $G$ on the vertex set $V(G)= \{x_1, \ldots, x_n\}$ and the edge set $E(G)$. One defines whisker graph $G^*$ of $G$ to be the graph with vertex set $\{x_1, \ldots, x_n, y_1, \ldots, y_n\}$ and edge set $E(G) \cup \{ \{x_i, y_i\} : \;  i=1, \ldots, n\}$. By using the results of Section~\ref{polarization}, one easily sees that the homology classes of the cycles
 \begin{eqnarray}\label{basis1}
 x_{i_1} \ldots x_{i_k}  e_{j_1} \wedge e_{j_{n-k}}\wedge f_{i_1} \wedge \ldots \wedge f_{i_k}
 \end{eqnarray}
with $\MS = \{i_1, \ldots, i_k\}$ a maximal independent set of $G$ and $\{j_1, \ldots, j_{n-k}\}=V(G) \setminus \MS$, form a basis of the Koszul homology $H_n(x_1, \ldots, x_n, y_1, \ldots, y_n ; S^*/I(G^*))$. Here $e_1, \ldots, e_n, f_1, \ldots, f_n$ is a $S^*$-basis of free module $K_1(x_1, \ldots, x_n, y_1, \ldots, y_n; S^*/I(G^*))$ with $\partial (e_i) = x_i$ and $\partial (f_j)= y_j$. A basis cycle as described in (\ref{basis1}) is used in Section~\ref{whisker graphs} in the study of the powers of edge ideals of whisker graphs.

The homological and algebraic behavior of powers of an ideal has been subject of many research papers in recent years. In particular, the nature of the depth function $f(k)= \depth(S/I^k)$ of a graded ideal $I$ in a polynomial ring $S$ is still quite mysterious. While it is known by a classical result of Brodmann \cite{Br1} that $f(k)$ for $k \gg 0$ is constant, the behavior of $f(k)$ is not so well understood for initial values of $k$. In \cite{HH1}, it is shown that any non-decreasing bounded integer function $f(k)$ is the depth function of a suitable monomial ideal and it is conjectured that $f(k)$ can be any convergent nonnegative integer valued function. In support of this conjecture, it was shown in \cite{BHH} that $f(k)$ may have arbitrarily many local maxima. On the other hand, it seems that the depth function for the edge ideals behave more tamely. In particular, it is expected that the depth function of an edge ideal is a non-increasing function. This is indicated by the fact that edge ideals satisfy the persistence property for the associated prime ideals of their powers, as shown in \cite{CMS}. Interesting lower bounds for the depth function of an edge ideal have been achieved by Morey \cite{M}. On the other hand, even for simple graphs like a line graph or a cycle, the precise depth function is unknown.

In this paper we give an upper bound for the depth function for any connected whisker graph. In fact we show in Theorem~\ref{whisker} that for any connected graph $G$ on the vertex set $[n]$, we have $\depth (S^*/I(G^*)^k) \leq n-k+1$ for $k= 1, \ldots , n$. It can be shown by examples that this upper bound is no longer valid if we drop the assumption that $G$ is connected. For connected graph this upper bound is obtained by constructing suitable non-vanishing homology classes for the Koszul homology of the powers of $I(G^*)$. The cycles representing these non-vanishing homology classes are obtained as products of certain 1-cycles and an $(n-1)$-cycle which is defined via an independent set of $G$. For showing that the homology of this product of cycles in the corresponding homology group is non-vanishing, we use a combinational fact proved in Proposition~\ref{gamma} which says that any connected graph admits a friendly independent set in the sense as described in this proposition. By using results from \cite{CMS} and \cite {EH2}, we show in Corollary~\ref{limit} that  $\depth(S^*/I(G^*)^k) =1$ for $k\geq n$ if $G$ is bipartite and  $\depth(S^*/I(G^*)^k) =0$ for $k\geq n$ if $G$ is non-bipartite.

The upper bound for the depth of the powers of a whisker graph given by our Theorem~\ref{whisker} is not always attained. The simplest examples for such case are the whisker graphs of a $3$-cycle or $4$-cycle. On the other hand, Villarreal \cite[Proposition 6.3.7]{V}, has shown that $\depth(S^*/I(G^*)^2) \geq n-1$ if $G$ is tree (or even a forest)  on the vertex set $[n]$. In Theorem~\ref{tree}, we extend the result of Villarreal and show that for any forest $G$ one has $\depth(S^*/I(G^*)^k) \geq n-k+1$ for $k=1, \ldots , n$. Together with Theorem~\ref{whisker} we conclude that for any tree $G$ we have $\depth(S^*/I(G^*)^k) = n-k+1$ for $k=1, \ldots, n$.


 \section{Polarization of Koszul cycles} \label{polarization}

 Let $K$ be a field and $I\subset S=K[x_1,\ldots,_n]$ a monomial ideal in the polynomial ring $S$. We denote as usual by $G(I)$ the unique minimal set of monomial generators  of $I$. If $u=x_1^{a_1}\cdots x_n ^{a_n}$ is a monomial, we call $\ab=(a_1,\ldots,a_n)$ the multi-degree of $u$ and set $\deg_{x_i}u=a_i$ for all $i$.

 Let $c_i=\max\{\deg_{x_i} u\:\; u\in G(I)\}$ for $i=1,\ldots,n$, and let $S^{\wp}$ be the polynomial ring over $K$ in the variables $x_{ij}$, $i=1,\ldots,n$,  $j=1,\ldots.c_i$. The {\em polarization}  of $I$ is the squarefree monomial $I^\wp\subset S^\wp$ generated by the monomials $u^\wp$ with $u\in G(I)$ where for $u=x_1^{a_1}\cdots x_n^{a_n}$ one sets
 \[
 u^\wp=\prod_{i=1,\ldots,n}\prod_{j=1,\ldots,a_i} x_{ij}.
\]

We extend this polarization operation to elements in the Koszul complex. Let $K(\xb;I)$ be the Koszul complex of the sequence $\xb=x_1,\ldots,x_n$ with values in $I$.  Recall that $K_i(\xb)=\bigwedge^iF$ where $F=\Dirsum_j^nSe_j$ and where $\partial e_j=x_j$ for $j=1,\ldots,n$, and that $K(\xb;I)=K(\xb)\tensor I$. Thus an element of $K_i(\xb;I)$ is of the form
\[
\sum_{J}f_Je_J,
\]
where the sum is taken over all ordered sets $J=\{j_1<j_2<\cdots <j_i\}$ of cardinality $i$, where $f_J\in I$ and where $e_J=e_{j_1}\wedge e_{j_2}\wedge \cdots \wedge e_{j_i}$.

Next we consider the Koszul complex $K(\xb^\wp; I^\wp)$. Here $\xb^\wp$ is the sequence
\[
x_{11},x_{12},\ldots,x_{1c_1},x_{21},\ldots, x_{2c_2},\ldots, x_{n1},\ldots, x_{nc_n},
\]
and $K_i(\xb^\wp)=\bigwedge^i G$ where $G=\Dirsum_{i=1,\ldots,n}\Dirsum_{j=1,\ldots,c_i}S^{\wp}e_{ij}$.

We call an element $u_Je_J$ a {\em monomial} of $K(\xb;I)$ if $u_J$ is a monomial. We set
\[
\deg_{x_i}(u_J e_J)= \deg_{x_i}u_J + \delta_j ,
\]
where
\[
\delta_j= \left\{ \begin{array}{ll}
       1, & \;\textnormal{if $j\in J$}, \\ 0, & \;\text{otherwise.}
        \end{array} \right.
\]
and call
\[
\deg(u_J e_J) = (\deg_{x_1}(u_J e_J), \ldots, \deg_{x_n} (u_J e_J) )
\]
the multi-degree of $u_Je_J$.

For any monomial $u_J e_J$ of multi-degree $\leq \cb$ (componentwise) where $\cb= (c_1, \ldots, c_n)$, we define the  polarization of $u_Je_J$ to be the monomial
\[
(u_Je_J)^\wp=u_J^\wp e_{j_1a_{j_1}+1}\wedge e_{j_2a_{j_2}+1}\wedge \cdots \wedge e_{j_ia_{j_i}+1},
\]
in $K(\xb^{\wp};I^\wp)$, where  $J=\{j_1<j_2<\cdots <j_i\}$, and $a_i = \deg_{x_i} u_J$.

We  extend this polarization operator to an arbitrary multi-homogeneous element $f=\sum_J\lambda_Ju_Je_J$, $\lambda_J\in K$,  of multi-degree $\leq \cb$ , by setting
\[
f^\wp= \sum_J\lambda_J(u_Je_J)^\wp.
\]
If follows from \cite[Theorem 3.1]{BHbook} that any non-vanishing homology class of $H_i(\xb;I)$ can be represented by a multi-homogeneous cycle $z=\sum_J\lambda_Ju_Je_J$ in $K_i(\xb;I)$ with $\deg z \leq \cb$. Thus the polarization of such cycles is defined.

\medskip

The following example demonstrate the polarization of cycles: let $I=(x_1^2 x_2, x_1 x_2^2)$. Then $z= x_1x_2^2 e_1 - x_1^2x_2 e_2$ is a cycle in $K_1(x_1, x_2;I)$, and $z^\wp = x_{11}x_{21} x_{22} e_{12} - x_{11}x_{12}x_{21}e_{22}$.
\medskip

With the notation introduced, we have

\begin{Theorem} \label{main}
Let $I \subset S=K[x_1, \ldots, x_n]$ be a monomial ideal and let $\cb=(c_1, \ldots, c_n)$  be the integer vector with $c_i=\max\{\deg_{x_i} u\:\; u\in G(I)\}$ for $i=1, \ldots, n$. Let $z_1, \ldots, z_r$ be multi-homogeneous cycles with multi-degree $\leq \cb$, whose  homology classes form a $K$-basis of $H_i(\xb;I)$. Then  the homology classes of the cycles $z_1^\wp, \ldots, z_r^\wp$ form  a $K$-basis of $H_i(\xb^\wp; I^\wp)$.
\end{Theorem}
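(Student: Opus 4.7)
The plan is to exhibit polarization as a chain map on the multi-degree $\leq\cb$ part of the Koszul complex, build a retraction via a depolarization map in the reverse direction, and conclude by comparing dimensions.

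First, I would establish the commutation identity $(\partial f)^\wp=\partial(f^\wp)$ for every multi-homogeneous $f\in K_i(\xb;I)$ of multi-degree $\leq \cb$. For a monomial $u_J e_J$ each term $x_{j_k}u_J e_{J\setminus\{j_k\}}$ of $\partial(u_J e_J)$ has the same multi-degree as $u_J e_J$ (since $\partial$ preserves multi-degrees), so its polarization is defined; a direct computation of both $\partial((u_J e_J)^\wp)$ and $(\partial(u_J e_J))^\wp$ yields
\[
\sum_{k=1}^{i}(-1)^{k+1}\,u_J^\wp\,x_{j_k,a_{j_k}+1}\,e_{j_1,a_{j_1}+1}\wedge \cdots \widehat{e_{j_k,a_{j_k}+1}}\cdots \wedge e_{j_i,a_{j_i}+1}.
\]
In particular each $z_k^\wp$ is a cycle in $K_i(\xb^\wp;I^\wp)$. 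Using \cite[Theorem 3.1]{BHbook} to represent every class of $H_i(\xb;I)$ by a cycle of multi-degree $\leq \cb$, and likewise arranging the bounding chain to have multi-degree $\leq \cb$ (possible because $\partial$ preserves multi-degrees), polarization descends to a well-defined $K$-linear map $\bar{\wp}\: H_i(\xb;I)\to H_i(\xb^\wp;I^\wp)$, $[z]\mapsto[z^\wp]$.

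Second, I would construct a chain map $\psi\: K(\xb^\wp;I^\wp)\to K(\xb;I)$ in the opposite direction, defined as the $K$-algebra map with $x_{ij}\mapsto x_i$, $e_{ij}\mapsto e_i$. Since $\psi\partial_{\xb^\wp}(e_{ij})=\psi(x_{ij})=x_i=\partial_\xb(e_i)=\partial_\xb\psi(e_{ij})$ and both $\partial$ and $\psi$ are compatible with the exterior product, $\psi$ is a chain map $K(\xb^\wp;S^\wp)\to K(\xb;S)$; note that $\psi$ annihilates any wedge $e_{k,\ell_1}\wedge e_{k,\ell_2}$ with repeated first index, consistently with $e_k\wedge e_k=0$. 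Because $\psi(u^\wp)=u$ for every generator $u\in G(I)$, we have $\psi(I^\wp)\subset I$, so $\psi$ restricts as claimed. A term-by-term computation gives $\psi((u_J e_J)^\wp)=u_J e_J$, hence $\psi\circ\wp=\id$ on the subcomplex of multi-degree $\leq\cb$ elements.

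Finally, passing to homology gives $\bar{\psi}\circ\bar{\wp}=\id_{H_i(\xb;I)}$, so $\bar{\wp}$ is injective. Since polarization preserves graded Betti numbers -- the linear forms $x_{ij}-x_{i1}$ form a regular sequence on $S^\wp/I^\wp$ with quotient $S/I$, whence $\beta_{i,j}(S/I)=\beta_{i,j}(S^\wp/I^\wp)$ -- one has $\dim_K H_i(\xb;I)=\dim_K H_i(\xb^\wp;I^\wp)$. Therefore $\bar{\wp}$ is a $K$-isomorphism and carries the basis $[z_1],\ldots,[z_r]$ onto the basis $[z_1^\wp],\ldots,[z_r^\wp]$. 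The main obstacle is the chain-map identity $(\partial f)^\wp=\partial(f^\wp)$: once the sign and index bookkeeping is done (the key point being that the constraint $\deg \leq \cb$ is preserved by $\partial$), the retraction $\psi$ can be written down by inspection and the remainder of the argument is formal.
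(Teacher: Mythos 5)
Your proof is correct, and it takes a genuinely different route from the one in the paper. The paper proceeds by induction on \emph{$1$-step polarizations}: it defines the intermediate ideal $I'$ obtained by splitting a single variable, shows that the $1$-step polarization $z'$ of a cycle $z$ is again a cycle, and then applies a comparison lemma (Lemma~\ref{comparison}) to the non-zero-divisor $y-x_1$ on $S[y]/I'$ to transport the homology class through each step; the full theorem then follows by iterating. Your argument instead treats the full polarization at once: you verify the chain-map identity $(\partial f)^\wp=\partial(f^\wp)$ on the multi-degree $\leq\cb$ part of $K(\xb;I)$ (so that $\wp$ passes directly to homology), build a single explicit left inverse $\psi$ at the chain level via $x_{ij}\mapsto x_i$, $e_{ij}\mapsto e_i$, obtain injectivity of $\bar\wp$ from $\psi\circ\wp=\mathrm{id}$, and then close the argument by the equality $\dim_K H_i(\xb;I)=\dim_K H_i(\xb^\wp;I^\wp)$ coming from the standard fact that polarization preserves Betti numbers. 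What your approach buys is a cleaner, non-inductive argument that avoids writing down and checking the intermediate cycles $z'$ and the reduction-mod-regular-element machinery of Lemma~\ref{comparison}; what the paper's $1$-step approach buys is that it never introduces the depolarization map $\psi$ and stays entirely within the framework of successive specializations, which is the same framework used elsewhere in the paper. Both proofs ultimately rest on the same underlying fact, namely that $S/I$ and $S^\wp/I^\wp$ have the same graded Betti numbers (your proof invokes it directly for the dimension count; the paper encodes it in the $\Tor$ computation inside Lemma~\ref{comparison}). One small point worth spelling out, which you gesture at but do not fully justify: the well-definedness of $\bar\wp$ on a given multi-degree component requires that a null-homotopic multi-homogeneous cycle $z=\partial b$ of multi-degree $a\leq\cb$ admits a bounding chain $b$ that is \emph{also} multi-homogeneous of multi-degree $a$. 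This is true (take the multi-degree-$a$ component of $b$, using that $\partial$ is multi-degree preserving), and you do note the key reason, so this is a matter of exposition rather than a gap.
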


The theorem will be a consequence of the following

\begin{Lemma}
\label{comparison}
Let $M$ be a finitely graded $S$-module, and assume that $x_1$ is a non zero-divisor modulo $M$. Then there is a natural isomorphism
\[
\varphi: H_{i}(x_1,\ldots,x_n;M)\to H_i(x_2,\ldots,x_n;\bar{M}),
\]
where $\bar{M}$ is the $\bar{S} = S/x_1S$-module $M/x_1M$. This isomorphism is given as follows: let $z \in Z_i(x_1,\ldots,x_n;M)$ and write $z=e_1\wedge z_0+z_1$ with $z_1\in K_i(x_2,\ldots,x_n;M)$. Then the homology class $[z]\in H_{i}(x_1,\ldots,x_n;M)$ is mapped to $[\bar{z}_1]\in H_{i}(x_2,\ldots,x_n;\bar{M})$, where $\bar{z}_1$ is obtained from $z_1$ by taking the residue classes of the coefficients of $z_1$ modulo $x_1$.
\end{Lemma}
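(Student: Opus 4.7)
The plan is to verify directly that $\varphi([z]) := [\bar z_1]$ is a well-defined isomorphism. I would work throughout with the decomposition
\[
K_i(x_1,\ldots,x_n;M) \;=\; \bigl(e_1 \wedge K_{i-1}(x_2,\ldots,x_n;M)\bigr) \oplus K_i(x_2,\ldots,x_n;M),
\]
writing every element uniquely as $z = e_1 \wedge z_0 + z_1$, and use repeatedly the signed Leibniz identity $\partial(e_1 \wedge z_0) = x_1 z_0 - e_1 \wedge \partial(z_0)$. For well-definedness, expanding $\partial(z) = 0$ and separating the $e_1$-part from the rest gives the two relations $\partial(z_0) = 0$ and $x_1 z_0 + \partial(z_1) = 0$; reducing the second modulo $x_1$ shows $\partial(\bar z_1) = 0$. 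If moreover $z = \partial(w)$ with $w = e_1 \wedge w_0 + w_1$, the same decomposition yields $z_1 = x_1 w_0 + \partial(w_1)$, whence $\bar z_1 = \partial(\bar w_1)$ is a boundary. Thus $\varphi$ descends to homology.

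For surjectivity, given a cycle $\bar y \in Z_i(x_2,\ldots,x_n;\bar M)$, lift it to $y \in K_i(x_2,\ldots,x_n;M)$. Since $\partial y$ reduces to $0$ modulo $x_1$, one can write $\partial y = x_1 y'$ for a uniquely determined $y'$, using that $x_1$ is a non-zero-divisor on $M$. From $x_1 \partial(y') = \partial^2(y) = 0$ and the same regularity, $\partial(y') = 0$, so $z := -e_1 \wedge y' + y$ is a cycle in $K_i(x_1,\ldots,x_n;M)$, and its image under $\varphi$ is $[\bar y]$. For injectivity, suppose $\bar z_1 = \partial(\bar w)$; lift $\bar w$ to $w$ and write $z_1 - \partial(w) = x_1 v$ for some $v \in K_i(x_2,\ldots,x_n;M)$. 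Combining $\partial(z_1) = -x_1 z_0$ with the consequence $\partial(z_1) = -x_1 \partial(v)$ and invoking regularity of $x_1$ once more yields $z_0 = -\partial(v)$; a short computation then confirms $z = \partial(e_1 \wedge v + w)$.

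Naturality in $M$ (for morphisms of modules on which $x_1$ acts as a non-zero-divisor) is transparent from the explicit formula defining $\varphi$. The proof is essentially bookkeeping: there is no serious obstacle beyond careful sign tracking in the Leibniz rule. The only substantive input is that the non-zero-divisor hypothesis licenses each of the three cancellations of an $x_1$ factor, namely in defining $y'$, in deducing $\partial(y') = 0$, and in solving for $z_0$ during the injectivity argument; without it none of these steps go through.
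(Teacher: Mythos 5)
Your proof is correct (up to a sign typo: in the injectivity step the correct intermediate relation is $\partial(z_1) = x_1\partial(v)$, not $-x_1\partial(v)$, which is what you actually need to conclude $z_0 = -\partial(v)$ and thus $z = \partial(e_1\wedge v + w)$). You take the same route as the paper for well-definedness and surjectivity, but you handle injectivity differently. The paper avoids checking injectivity altogether: it identifies $H_i(x_1,\ldots,x_n;M)\cong\Tor_i^S(K,M)$ and $H_i(x_2,\ldots,x_n;\bar M)\cong\Tor_i^{\bar S}(K,\bar M)$, notes that since $x_1$ is $M$-regular the minimal graded free resolution of $\bar M$ over $\bar S$ is obtained by reducing that of $M$ modulo $x_1$, concludes the two homology groups have the same $K$-dimension, and then deduces that surjectivity alone implies $\varphi$ is an isomorphism. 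Your argument instead proves injectivity by a direct cycle/boundary computation, using regularity of $x_1$ to divide out an $x_1$ factor. Your route is more elementary and self-contained (no Tor, no appeal to uniqueness of minimal resolutions) and does not actually need $M$ finitely generated, only that $x_1$ is a non-zero-divisor on $M$; the paper's route is shorter once the Tor comparison is taken as known, but it does rely on finite generation to have a minimal free resolution and well-defined Betti numbers. Both are sound.
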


\begin{proof}
Observe that $\bar{z_1}$ is indeed a cycle in $K(x_2, \ldots x_n; \bar{M})$, because $0= x_1 z_0 - e_1 \wedge \partial z_0 + \partial z_1$. From this equation it follows that $e_1 \wedge \partial z_0 =0$ and hence $\partial \bar{z_1} =0$. Next we show that $\varphi$ is well defined. Let $z$ be as in the statement and let $w=z+\partial b$ where $b \in K_{i+1}(x_1, \ldots, x_n;M)$. Let $b= e_1 \wedge b_0 + b_1$ with $b_1 \in K_{i+1}(x_2, \ldots, x_n; M)$. Then $w= e_1\wedge w_0 + w_1 $ where $w_1 = z_1 + x_1 b_0 + \partial b_1$. We have to show that $[\bar{w}_1] = [\bar{z}_1]$. But this is obvious, because $\bar{w}_1 = \bar{z}_1 + \partial \bar{b}_1$, so that $\bar{w}_1$ and $\bar{z}_1$ differ only by a boundary in $K_{i}(x_2, \ldots, x_n;\bar{M})$.

Since $H_i (x_1, \ldots, x_n ; M) \iso \Tor_i^S (K;M)$ and $H_i (x_2, \dots, x_n; \bar{M}) \iso \Tor_i^{\bar{S}} (K, \bar{M})$, we conclude that $\dim_K H_i (x_1, \ldots, x_n ; M) = \dim_K H_i (x_2, \dots, x_n; \bar{M})$. Indeed, since $x_1$ is a non-zero on $M$, the graded minimal free resolution of $\bar{M}$ is obtained from the graded minimal free resolution of $M$ be reduction modulo $x_1$. This implies that $\dim_K \Tor_i^S (K;M) = \dim_K  \Tor_i^{\bar{S}} (K, \bar{M})$. Hence, in order to prove that $\varphi$ is an isomorphism, it suffices to show that $\varphi$ is surjective.

Let $[v] \in H_i (x_2, \ldots, x_n ; \bar{M})$. There exists $z_1 \in K_i (x_2, \ldots, x_n; M)$ with $\bar{z_1} = v$. It follows that $\partial z_1 = -x_1 z_0$ for some $z_0 \in K_{i-1} (x_2, \ldots, x_n;M)$. Since $0= \partial^2 z_1 = -x_1 \partial z_0$, we see that $\partial z_0 =0$. Now we set $z= e_1 \wedge z_0 + z_1$. Then $z$ is a cycle and $\varphi [z] = [v] $.
\end{proof}

\begin{proof}[Proof of Theorem~\ref{main}]
Fix an integer $1 \leq i \leq n$. For each $u \in G(I)$ we define

\[
u'= \left\{ \begin{array}{ll}
       (u/x_i)y , & \;\textnormal{if $x_i^2 | u$}, \\ u, & \;\text{otherwise.}
        \end{array} \right.
\]
The element $u'$ is called the {\em $1$-step polarization} of $u$ with respect to the variable $x_i$, and the ideal
$I' = (\{u'| u \in G(I)\})$ is called a 1-step polarization of $I$. Obviously, the (complete) polarization of $I$ can be obtained by a sequence of 1-step polarization.

Let $I'$ be the 1-step polarization of $I$ with respect to $x_i$. Without loss of generality, we may assume that $i=1$.  We consider the Koszul complex $K(y,x_1, \ldots, x_n; I')= (\bigwedge H) \tensor I'$ where $H$ is the free $S[y]$-module with basis $f, e_1 \ldots, e_n$ and where $\partial f = y$ and $\partial e_j = x_j$ for $j=1, \ldots, n$. Let $z= \sum_{J} \lambda_J u_J e_J $ be a multi-homogenous cycle of $K_i(x_1, \ldots, x_n;I)$ with $\deg z \leq \cb$ whose homology class is non-zero.

We set $z' = \sum_{J} \lambda_J (u_J e_J)'$, where
\[
(u_J e_J)'= \left\{ \begin{array}{ll}
       u_J e_J , & \;\textnormal{if $x_1 \nmid u_J$}, \\ u_{J}'e_J, & \;\text{if $x_1|u_J$ and $1 \notin J$},
       \\ u_{J}e_{J}', & \;\text{if $x_1|u_J$ and $1 \in J$}.
        \end{array} \right.
\]

Here $e_{J}'$ is obtained from $e_J$ by replacing the factor $e_1$ in $e_J$  by $f$.
\medskip

As an example we consider again the cycle $z= x_1x_2^2 e_1 - x_1^2x_2 e_2$ in $K_1(x_1,x_2;I)$ where $I=(x_1^2 x_2, x_1x_2^2)$. Then $z' = x_1 x_2^2 f - x_1 y x_2 e_2$.

\medskip

We claim that $z'$ is a cycle in $K_i(y, x_1, \ldots, x_n; I')$, and that the map
\[
 H_i (y, x_1, \ldots, x_n;I') \rightarrow H_i( x_1, \ldots, x_n; I), \quad [z'] \mapsto [z]
\]
is an isomorphism. From this claim the theorem follows by induction on the number of 1-step polarization which are required to obtain the polarization $I^{\wp}$ of $I$.

\medskip
Proof of the claim: we first show that $z'$ is a cycle. We first discuss the case when $\deg_{x_1} z \leq 1$.

By the definition of $(u_J e_J)'$, we have $(u_J e_J)' = u_J e_J$, for all $J$. It shows that $z=z'$ and hence $z'$ is a cycle.

Now we discuss the case when $\deg_{x_1}z > 1$. Let $z=e_1\wedge z_0+z_1$  and $z'= f \wedge z'_0 +z'_1$ with $z_1\in K_i(x_2,\ldots,x_n;I)$ and $z'_1\in K_i(x_1,\ldots,x_n;I')$. Moreover,  $z_0 = \sum_{1 \in J} \lambda_J u_J e_{J \setminus \{1\}}$ and $z_1 = \sum_{1 \notin J} \lambda_J u_J e_J$. From the definition of $z'$ we see that $z'_0 = z_0$ and $z'_1  = \sum_{1\notin J} \lambda _J u'_J e_J$ where $u'_J = y u_J / x_1 $. It implies that $z'_1 = (y/x_1 ) z_1$. By applying $\partial$ on $z'$, we get $\partial (z') = y z_0 + \partial (z'_1) = y z_0 + (y/x_1) \partial (z_1)$. It  shows that $x_1\partial (z') = y \partial (z)  = 0$. Hence $\partial (z') = 0$.

We first observe that $y-x_1$ is a non-zero divisor on $S[y]/I'$ and that $I' / (y-x_1)I' = I$. Therefore, by Lemma~\ref{comparison}, there exists the isomorphism $\varphi : H_i(y, x_1, \ldots, x_n) = H_i(y-x_1, x_1, \ldots, x_n; I') \rightarrow H_i (x_1, \ldots, x_n; I)$. Thus it remains to be shown that  $\varphi([z']) = [z]$. Applying the Lemma~\ref{comparison}, we write $z'= (f-e_1) \wedge w_0 +w_1$.

By definition,
\begin{eqnarray*}
z'&=& \sum_{x_1 \nmid u_J} u_Je_J + \sum_{x_1 | u_J, 1 \notin J} u'_Je_J + \sum_{x_1|u_J, 1 \in J} u_J f \wedge e_{J \setminus \{1\}} \\
&=& \sum_{x_1 \nmid u_J} u_Je_J + \sum_{x_1 | u_J, 1 \notin J} u'_Je_J + \sum_{x_1|u_J, 1 \notin J} u_J (f-e_1) \wedge e_{J \setminus \{1\}} +  \sum_{x_1|u_J, 1 \in J} u_J  e_{J}.
\end{eqnarray*}

Therefore,
\begin{eqnarray*}
w_1&=& \sum_{x_1 \nmid u_J} u_Je_J + \sum_{x_1 | u_J, 1 \notin J} u'_Je_J +  \sum_{x_1|u_J, 1 \in J} u_J  e_{J}.
\end{eqnarray*}

From this it follows that $\bar{w}_1 = z$,  which by the definition of $\varphi$ implies that $\varphi ([z']) = [z]$, as desired.
\end{proof}

\begin{Corollary}\label{polarize}
Let $I\subset S$ be a monomial ideal as in Theorem~\ref{main}. Let $z_1, \ldots, z_r$ be multi-homogeneous cycles with multi-degree $\leq \cb$, whose  homology classes form a $K$-basis of $H_i(\xb;S/I)$ for $i \geq 1$. Then  the homology classes of the cycles $z_1^\wp, \ldots, z_r^\wp$ form  a $K$-basis of $H_i(\xb^\wp; S^\wp/I^\wp)$.
\end{Corollary}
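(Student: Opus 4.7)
The plan is to reduce Corollary~\ref{polarize} to Theorem~\ref{main} by means of the long exact sequences of Koszul homology attached to the short exact sequence $0 \to I \to S \to S/I \to 0$ and its polarized counterpart $0 \to I^\wp \to S^\wp \to S^\wp/I^\wp \to 0$. Since $\xb$ is a regular sequence in $S$ one has $H_j(\xb; S) = 0$ for all $j \geq 1$, and since $I$ is contained in the ideal generated by $\xb$ the map $H_0(\xb; I) \to H_0(\xb; S)$ vanishes; hence the connecting homomorphism $\delta \colon H_i(\xb; S/I) \to H_{i-1}(\xb; I)$ is an isomorphism for every $i \geq 1$. The same reasoning applies to $\xb^\wp$ in $S^\wp$, which is a regular sequence since $\xb^\wp$ consists of the full variable set of $S^\wp$, so one also obtains an isomorphism $\delta^\wp \colon H_i(\xb^\wp; S^\wp/I^\wp) \to H_{i-1}(\xb^\wp; I^\wp)$.

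Next I would verify that polarization commutes with the Koszul differential on multi-homogeneous chains of multi-degree $\leq \cb$. A direct calculation on a monomial $u_J e_J$ shows both operations give the same result: the factor $x_{j_k}$ appearing under $\partial$ polarizes to the variable $x_{j_k, a_{j_k}+1}$, which is precisely the factor that appears in $(u_J e_J)^\wp$ once the basis element $e_{j_k, a_{j_k}+1}$ is removed by $\partial^\wp$. Since $\partial$ preserves multi-degree, this extends $K$-linearly to give $\partial \circ \wp = \wp \circ \partial$ on all such chains. Using this, I extend polarization to multi-homogeneous cycles $z \in K_i(\xb; S/I)$ of multi-degree $\leq \cb$: lift each coefficient to a monomial representative in $S$ to obtain $\tilde z \in K_i(\xb; S)$, polarize to $\tilde z^\wp \in K_i(\xb^\wp; S^\wp)$, and project to $K_i(\xb^\wp; S^\wp/I^\wp)$. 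Because $\partial \tilde z \in K_{i-1}(\xb; I)$, the commutation gives $\partial \tilde z^\wp = (\partial \tilde z)^\wp \in K_{i-1}(\xb^\wp; I^\wp)$, so $z^\wp$ is indeed a cycle in the quotient complex and $\delta^\wp[z^\wp] = [(\partial \tilde z)^\wp]$.

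The corollary then falls out. Given the hypothesized basis $[z_1], \ldots, [z_r]$ of $H_i(\xb; S/I)$, the cycles $\partial \tilde z_1, \ldots, \partial \tilde z_r$ are multi-homogeneous of multi-degree $\leq \cb$ in $K_{i-1}(\xb; I)$, and their classes form a basis of $H_{i-1}(\xb; I)$ via $\delta$. Theorem~\ref{main} then produces a basis $[(\partial \tilde z_1)^\wp], \ldots, [(\partial \tilde z_r)^\wp]$ of $H_{i-1}(\xb^\wp; I^\wp)$. Since $\delta^\wp$ is an isomorphism sending $[z_k^\wp]$ to $[(\partial \tilde z_k)^\wp]$, the classes $[z_1^\wp], \ldots, [z_r^\wp]$ form a basis of $H_i(\xb^\wp; S^\wp/I^\wp)$, as desired. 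The main obstacle, though really just careful bookkeeping rather than a deep difficulty, is the compatibility developed in the second paragraph: one must check that polarization commutes with $\partial$ and descends unambiguously to the quotient Koszul complex so that it interacts properly with the connecting homomorphism. Once this is in place, Theorem~\ref{main} does the heavy lifting.
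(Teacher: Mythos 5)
Your proposal is correct and follows essentially the same route as the paper: both use the connecting isomorphism $H_i(\xb;S/I)\iso H_{i-1}(\xb;I)$ (and its polarized analogue) coming from $0\to I\to S\to S/I\to 0$, together with the compatibility $\partial(f^\wp)=(\partial f)^\wp$ for multi-homogeneous $f$ of multi-degree $\leq\cb$, to reduce to Theorem~\ref{main}. Your write-up is in fact somewhat more careful than the paper's two-line argument (which, incidentally, has a typo writing $H_{i+1}$ for the target of the connecting map), since you verify the commutation with $\partial$ and the well-definedness of polarizing cycles with coefficients in $S/I$ explicitly.
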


\begin{proof}
We notice that for $i \geq 1$ there is an isomorphism $\varphi: H_i(\xb^\wp;S^\wp/I^\wp) \rightarrow H_{i+1}(\xb^\wp;I^\wp)$ with $\varphi ([z]) = [\partial(w)]$ and $w \in K(\xb^\wp; S^\wp)$ such that $z=w+I^\wp K(\xb^\wp;S^\wp)$. Since $\partial(f^\wp) = (\partial(f))^\wp$ for any multi-homogenous element $f \in K(\xb; S^\wp)$ with $\deg f \leq \cb$, the desired conclusion follows.
\end{proof}

As an example for the polarization of Koszul cycles, we consider whisker graphs. Let $G$ be a finite simple graph on the vertex set $[n]=\{1, \ldots, n\}$. The {\em whisker graph} $G^*$ of $G$ is the graph with the vertex set $V(G^*)=\{1, \ldots, n\} \cup \{1', \ldots, n'\}$ and the edge set $E(G^*)=E(G) \cup \{ \{1, 1'\}, \{2, 2'\}, \ldots, \{n, n'\} \}$.

\medskip
Figure~\ref{one} displays the whisker graph of the graph $G$ with edges $\{1,2\}, \{2,3\}, \{3,4\}$ and $\{4,2\}$.

\begin{figure}[hbt] \begin{center} \label{one}
\psset{unit=0.6cm}\begin{pspicture}(0.5,1)(4.5,5) \pspolygon(2,2)(3,3.71)(4,2)\psline(3,3.71)(3,5.2)\psline(0.6,1.1)(2,2)\psline(4,2)(4,3.49) \psline(2,2)(2,3.49) \psline(0.6,1.1)(0.6,2.59) \rput(0.6,2.59){$\bullet$} \rput(2,3.49){$\bullet$}\rput(4,3.49){$\bullet$}\rput(2,2){$\bullet$}\rput(3,3.71){$\bullet$}\rput(4,2){$\bullet$}\rput(3,5.2){$\bullet$}\rput(0.6,1.1){$\bullet$}\rput(0.6,3.1){$1'$}\rput(2,4){$2'$}\rput(4.25,4){$3'$}\rput(2,1.5){$2$}\rput(3.25,4){$4$}
\rput(4,1.5){$3$}\rput(3.1,5.7){$4'$}\rput(0.6,0.6){$1$}
\end{pspicture}
\end{center}
\caption{}
\label{example}\end{figure}

Let $K$ be a field. The edge ideal $I(G)$ of $G$ is the monomial ideal in $S=K[x_1, \ldots, x_n]$ generated by the monomials $x_ix_j$ with $\{i,j\} \in E(G)$. We consider the edge ideal $I(G^*)$ of the whisker graph $G^*$ of $G$ as the monomial ideal in $S^*=K[x_1, \ldots, x_n, y_1, \ldots, y_n]$ with $I(G^*) = I_G + (\{ x_k y_k |  k \in [n]  \})$.

Next, we let $J(G) = ( I(G) , x_1^2 , \ldots, x_n^2 )$. Then, obviously, $I(G^*) = J(G)^{\wp}$, where for simplicity  we set $x_i = x_{i1} , y_i = x_{i2}$, for $i= 1, \ldots, n$. For the polarized Koszul complex of $K(x_1, \ldots, x_n; I(G))$ we use the notation $e_i = e_{i1}$ and $f_i = e_{i2}$. Given a cycle $\sum_J \lambda_J u_J e_J \in K(x_1, \ldots, x_n; J(G))$ representing a non-zero homology class, the polarized cycle in $K(x_1, \ldots, x_n, y_1, \ldots, y_n ; I(G^*))$ is given as $\sum_J \lambda_j u_j e_{J'}$ where $e_{J'} $ is obtained from $e_J$ by replacing $e_{j}$ for $j \in J$ by $f_j$ if $x_j |u$.

Note that $H_n(x_1, \ldots, x_n; J(G))$ is minimally generated by the homology classes $[u e_1\wedge \ldots \wedge e_n]$ with $u = x_{i_1}\ldots x_{i_k}$ where $\{i_1, \ldots, i_k\}$ is a maximal independent set of $G$. Recall that a subset $\MS \subset V(G)$ is an {\em independent set} of $G$ if $\{i,j\} \notin E(G)$ for all $i,j \in \MS$. The set $\MS$ is called a maximal independent set if $\MS \cup \{k\}$ is not independent for all $k \notin V(G) \setminus \MS$.

 It follows from Corollary \ref{polarize}, that the elements
 \begin{eqnarray}\label{basis}
 x_{i_1} \ldots x_{i_k}  e_{j_1} \wedge e_{j_{n-k}}\wedge f_{i_1} \wedge \ldots \wedge f_{i_k}
 \end{eqnarray}
form a basis of $H_n(x_1, \ldots, x_n, y_i, \ldots, y_n ; S^*/I(G^*))$ where $\MS = \{i_1, \ldots, i_k\}$ is a maximal independent set of $G$ and $\{j_1, \ldots, j_{n-k}\}=V(G) \setminus \MS$.

\section{Powers of whisker graphs}\label{whisker graphs}

In this section, we want to study the powers of whisker graphs. For the formulation of the main result we introduce the following concept. Let $G$ be a finite simple graph on $[n]$, and let $\MS$ be a maximal independent subset of $V(G)$. We define the graph $\Gamma_{\MS}(G)$ with vertex $V(\Gamma_{\MS}(G)) =  \MS$ and $\{i,j\} \in E(\Gamma_{\MS}(G))$ if and only if there exists $k \in V(G) \setminus \MS$ such that $\{i,k\}, \{j,k\}
 \in E(G)$.

\begin{Proposition}\label{gamma}
Let $G$ be a finite simple connected graph. Then there exists an independent set $\MS$ such that $\Gamma_{\MS}(G)$ is connected.
\end{Proposition}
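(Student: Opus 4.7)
The plan is to proceed by induction on $n = |V(G)|$, removing at each step a non-cut vertex $v$ of $G$ (which exists in any connected graph with at least two vertices, e.g.\ a leaf of any spanning tree), so that $G' = G \setminus \{v\}$ remains connected. The inductive hypothesis will be carried in the slightly stronger form that the required independent set can be chosen \emph{maximal} — this stronger form matches the usage in Section~\ref{polarization} where bases of Koszul homology are indexed by maximal independent sets, and it is what makes the induction go through.

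Before the induction I would record the following simplification. If $\MS \subseteq V(G)$ is independent and $k \in V(G)$ is a common neighbor of two distinct $i, j \in \MS$, then $k \notin \MS$, for otherwise $\{i,k\}$ would contradict independence of $\MS$. Thus for independent $\MS$ the graph $\Gamma_\MS(G)$ has edge $\{i,j\}$ precisely when $i, j \in \MS$ share a common neighbor in $G$; the requirement that the common neighbor lie outside $\MS$ is automatic. This reformulation is what I will use throughout.

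Now suppose $|V(G)| \geq 2$ and the statement holds for smaller connected graphs, and let $\MS'$ be a maximal independent set of $G'$ with $\Gamma_{\MS'}(G')$ connected. I would split into two cases. If $v$ has a neighbor in $\MS'$, set $\MS = \MS'$; then $\MS$ is maximal independent in $G$ (every vertex of $V(G') \setminus \MS'$ has a neighbor in $\MS'$ by maximality in $G'$, and $v$ itself has a neighbor in $\MS$ by the case assumption), and $\Gamma_\MS(G)$ contains $\Gamma_{\MS'}(G')$ by the simplification above, so it is connected. If on the other hand no neighbor of $v$ lies in $\MS'$, set $\MS = \MS' \cup \{v\}$; independence and maximality of $\MS$ in $G$ are immediate, and to see connectivity of $\Gamma_\MS(G)$ I would pick any $u \in N_G(v)$ (nonempty since $G$ is connected and $n \geq 2$), observe that $u \in V(G') \setminus \MS'$, and invoke maximality of $\MS'$ in $G'$ to produce $w \in \MS'$ adjacent to $u$; then $u$ is a common $G$-neighbor of $v$ and $w$, so $\{v, w\}$ is an edge of $\Gamma_\MS(G)$ linking $v$ to the already-connected subgraph supported on $\MS'$.

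The main point on which the argument hinges is the second case: the length-two bridge from $v$ into $\MS'$ exists only because $\MS'$ was taken \emph{maximal} in $G'$, not merely independent. This is the one place where the stronger inductive hypothesis is essential, and it is what dictates both the choice of invariant to induct on and the decision to remove a non-cut vertex (so that $G'$ is connected and the hypothesis applies).
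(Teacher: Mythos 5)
Your proof is correct and takes a genuinely different route. The paper argues via the maximal cliques of $G$: it processes them one at a time, at each step choosing a new vertex $v_i$ from a maximal clique that meets, but is not yet covered by, the previously processed ones, and shows $\{v_1,v_2,\ldots\}$ is a friendly independent set. You instead induct on $|V(G)|$ by deleting a non-cut vertex, and strengthen the inductive hypothesis to produce a \emph{maximal} independent set. That strengthening is exactly the right move, and not only because it closes the induction (it supplies the length-two bridge in your second case): $\Gamma_{\MS}(G)$ is defined in the paper only for maximal independent $\MS$, and the proof of Theorem~\ref{whisker} invokes maximality of the friendly set explicitly when counting the factors of $I(G^*)$ in the coefficient of $w$, so the version you prove is the one actually needed downstream. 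Your preliminary remark that the clause $k \notin \MS$ in the definition of $\Gamma_{\MS}(G)$ is automatic once $\MS$ is independent is correct and is a small genuine simplification. On balance, the paper's clique-by-clique construction is more explicit about which set to take, while your induction is shorter and easier to verify; both deliver the required maximality.
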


\begin{proof}
Let $\Delta(G)$ be the clique complex of $G$ with cliques $F_1, \ldots, F_r$. We are going to construct the independent set $\MS$ of $G$ as follows. Let $v_1 \in V(F_1)$. We may assume that $v_1 \in V(F_i)$ for $i = 1, \ldots, t$ and $v_1 \notin V(F_i)$ for $i>t$. If $t=r$, then we are done. Assume that $t <r$. Since $G$ is connected, there exists  $F_i$ with $i>t$, say $F_{t+1}$, such that $V(F_{t+1}) \cap V(F_j) \neq \emptyset $ for some $j \leq t$. Since $F_{t+1}$ is a maximal clique,  $V(F_{t+1}) \not\subset \bigcup_{i=1}^t V(F_i) $ because otherwise $v_1 \in V(F_{t+1})$, a contradiction. Hence, we may choose $v_2 \in V(F_{t+1} )\setminus \bigcup_{i=1}^t V(F_i) $. We may assume that $v_2 \in V(F_i)$ for $i=t+1, \ldots, s$ and does not belong to any other clique. If $s=r$, then $\Gamma_{\MS}(G) $ is a line graph with vertex set $ \{v_1, v_2\}$. Indeed, $\{v_1, v_2\} \notin E(G)$  because the set of neighbors of $v_1$ is equal to $\bigcup_{i=1}^t F_i$ and $v_2 \notin \bigcup_{i=1}^t F_i $. On the other hand, if $k \in V(F_{t+1}) \cap V(F_j)$. then $\{v_1,k\}, \{v_2,k\} \in E(G)$. Therefore, $\{v_1, v_2\} \in E(\Gamma_{\MS}(G))$.

Consider all $F_j$ for $j >s$ such that $V(F_j) \subset \bigcup_{i=1}^t V(F_i)$. We may assume that it is the case for $F_{s+1}, \ldots, F_k$. If $k=r$, then $\{v_1, v_2\}$ is an independent set for $G$, and we are done. If $k<r$, then since $G$ is connected there exists a clique $F_i$, say $F_{k+1}$, such that $V(F_{k+1}) \cap V(F_j) \neq \emptyset $ for some $j<k$ and $V(F_{k+1}) \not\subset \bigcup_{i=1}^s V(F_i) (=  \bigcup_{i=1}^k V(F_i) ) $. We choose $v_3 \in V(F_{k+1}) \setminus \bigcup_{i=1}^s V(F_i)$. If $j<t$ then $\{v_1, v_3\} $ will be an edge of $\Gamma_{\MS}(G)$, and if $t+1\leq j\leq s$, then $\{v_2,v_3\}$ will be an edge of $\Gamma_{\MS}(G)$. Proceeding this way, we obtain the desired independent set $\MS$ of $G$ such that $\Gamma_{\MS}(G)$ is connected.
\end{proof}

We call an independent set $\MS$ of $G$ {\em friendly} if it satisfies the condition that $\Gamma_{\MS}(G)$ is connected. For example, if we consider the line graph $L$ on vertex set $[4]$ with edges $\{\{1,2\}, \{2,3\}, \{3,4\}\}$. Then $\MS= \{1,3\}$ is a friendly independent set  of $L$ while $\{1,4\}$ is not a friendly independent set of $L$.

\begin{Theorem}\label{whisker}
Let $G$ be a finite simple connected graph on vertex set $[n]$, and $G^*$ be the whisker graph of $G$. Furthermore, let $I(G^*)\subset S^*=K[x_1, \ldots, x_n, y_1, \ldots, y_n]$ be the edge ideal of $G^*$. Then
\[
\depth (S^*/I(G^*)^k) \leq n-k+1 , \text{\; for \;} k= 1, \ldots, n.
\]
\end{Theorem}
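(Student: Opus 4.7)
By the Auslander--Buchsbaum formula in the polynomial ring $S^*$ in $2n$ variables, the inequality $\depth(S^*/I(G^*)^k) \le n-k+1$ is equivalent to
\[
H_{n+k-1}(x_1,\ldots,x_n,y_1,\ldots,y_n;\; S^*/I(G^*)^k)\ne 0,
\]
so the whole proof amounts to producing, for every $k=1,\ldots,n$, an explicit $(n+k-1)$-cycle with non-vanishing homology class. For $k=1$ this is immediate from (\ref{basis}): for any maximal independent set $\MS=\{i_1,\ldots,i_m\}$ of $G$ the cycle
\[
c_\MS \;=\; x_{i_1}\cdots x_{i_m}\, e_{j_1}\wedge\cdots\wedge e_{j_{n-m}}\wedge f_{i_1}\wedge\cdots\wedge f_{i_m}
\]
represents a non-zero class in $H_n(\xb,\yb; S^*/I(G^*))$.

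For $k\ge 2$ the plan is to wedge $c_\MS$ with $k-1$ additional honest Koszul $1$-cycles lying in $I(G^*)$. First, invoke Proposition~\ref{gamma} to choose $\MS$ to be friendly, so that $\Gamma_\MS(G)$ is connected. Then for each edge $\{i_a,i_b\}\in E(\Gamma_\MS)$ with common neighbour $p_{ab}\in[n]\setminus\MS$ in $G$ one has the $1$-cycle
\[
\eta_{ab}\;=\;x_{i_b}x_{p_{ab}}\,e_{i_a}-x_{i_a}x_{p_{ab}}\,e_{i_b}\;\in\; K_1(\xb,\yb;I(G^*)),
\]
with coefficients in $I(G^*)$ and $\partial\eta_{ab}=0$; a spanning tree of $\Gamma_\MS$ supplies $m-1$ of these. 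These can be supplemented, when needed, by analogous $1$-cycles $\beta_{ij}=x_iy_i\,e_j-x_ix_j\,f_i$ attached to $G$-edges $\{i,j\}$ with $i\notin\MS$, $j\in\MS$, of which there are at least $n-m$ by maximality of $\MS$. Pick $k-1$ of these $1$-cycles $\eta^{(1)},\ldots,\eta^{(k-1)}$ so that the wedge product is non-degenerate, and set
\[
\xi_k\;=\;c_\MS\wedge\eta^{(1)}\wedge\cdots\wedge\eta^{(k-1)}\;\in\; K_{n+k-1}(\xb,\yb;S^*).
\]
By the Leibniz rule and $\partial\eta^{(s)}=0$, we have $\partial\xi_k=(\partial c_\MS)\wedge\eta^{(1)}\wedge\cdots\wedge\eta^{(k-1)}$, whose coefficients lie in $I(G^*)\cdot I(G^*)^{k-1}=I(G^*)^k$. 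Therefore $\xi_k$ is a cycle in $K_{n+k-1}(\xb,\yb;S^*/I(G^*)^k)$.

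The main obstacle is proving $[\xi_k]\ne 0$ in $H_{n+k-1}$, and I expect this to have two parts. First, one must check $\xi_k\notin I(G^*)^kK$: a distinguished multi-homogeneous summand of $\xi_k$ carries the coefficient $x_{i_1}\cdots x_{i_m}$ times a product of $k-1$ specific edge monomials, which lies in $I(G^*)^{k-1}$ but not in $I(G^*)^k$ because $\MS$ is independent and so the factor $x_{i_1}\cdots x_{i_m}$ contributes no extra edge divisors. Second, and harder, one must show that $\xi_k$ is not a boundary modulo $I(G^*)^k$; I would argue this by a multi-graded comparison with the polarized Koszul homology basis from Section~\ref{polarization}, and the friendliness of $\MS$ (the connectedness of $\Gamma_\MS$) is what ensures the multi-degree of the distinguished summand is outside the image of the boundary from $K_{n+k}$. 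The non-boundary step is where I expect the combinatorial input of Proposition~\ref{gamma} to be used most essentially, together with the bookkeeping needed to guarantee that a non-degenerate choice of $k-1$ compatible $1$-cycles exists for every $k$ up to $n$.
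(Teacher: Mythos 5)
Your construction is exactly the paper's: the same basis cycle $c_\MS$ from (\ref{basis}), the same friendly independent set from Proposition~\ref{gamma}, the same two families of $1$-cycles (the $\eta_{ab}$ attached to a spanning tree of $\Gamma_\MS(G)$ and the whisker cycles $\beta_{ij}$, which up to sign are the paper's $z_j$ and $z_k$), and the same idea of wedging $c_\MS$ with $k-1$ of them. The reduction to exhibiting a nonzero class in $H_{n+k-1}(S^*/I(G^*)^k)$ is also the paper's starting point. So the skeleton is right.

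The genuine gap is the non-boundary step, which you explicitly defer (``I would argue this by\dots'', ``I expect\dots''), and your sketched mechanism for it --- a multigraded comparison with the polarized Koszul basis --- is not what makes the argument work and is not obviously viable, since the polarization results of Section~\ref{polarization} only describe $H_*(S^*/I(G^*))$, not the homology of the powers. The paper's actual argument has two components you are missing. First, it proves non-vanishing only for the \emph{top} power $k=n$, using \emph{all} $n-1$ available $1$-cycles at once, and then deduces the case of general $k$ by contraposition: if $\partial(c)\wedge z'_1\wedge\cdots\wedge z'_{k-1}$ were a boundary $\partial(b)$ in $K(I(G^*)^k)$, then wedging $b$ with the remaining cycles $z'_k\wedge\cdots\wedge z'_{n-1}$ would exhibit the top cycle as a boundary. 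Without this bootstrap you would need a separate non-boundary proof for each $k$, and you have not specified which $k-1$ cycles to choose nor why the choice matters. Second, the $k=n$ case itself is a concrete term-tracking argument, not a degree argument: one orders the tree cycles by a leaf order so that a distinguished term $w$ of $a=c\wedge z_1\wedge\cdots$ (the one carrying $e_2\wedge\cdots\wedge e_n\wedge f_1\wedge\cdots\wedge f_n$) provably appears exactly once; its coefficient lies in $I(G^*)^{n-1}\setminus I(G^*)^n$ by maximality of $\MS$; any bounding chain forces a $2n$-chain $b'=v\,e_1\wedge\cdots\wedge f_n$ whose boundary must cancel $w$, pinning down the monomial $v$; and then the term $v\,y_n\,e_1\wedge\cdots\wedge e_n\wedge f_1\wedge\cdots\wedge f_{n-1}$ of $\partial(b')$ has coefficient outside $I(G^*)^n$ (since $x_n$ is the only neighbour of $y_n$) and cannot cancel against $a$, because every term of $a$ contains $f_1\wedge\cdots\wedge f_n$ --- which is precisely why the whisker cycles $\beta$ must all be included. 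None of this is routine bookkeeping, and it is where the proof actually lives; as written, your proposal identifies the right cycle but does not prove it is homologically nontrivial.
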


\begin{proof}
Let $M$ be an $S^*$-module and consider the Koszul complex
\[
K(M) = K(x_1, \ldots, x_n, y_1, \ldots, y_n;M)
\]
with $K_1(M) = \Dirsum_{i=1}^n M e_i \dirsum \Dirsum_{j=1}^n M f_j$ and $\partial e_i = x_i$ and $\partial f_j = y_j$, for all $i,j$.

We first show that
\[
H_{2n-2} (I(G^*)^n) \neq 0.
\]

This will imply that $\depth (S^*/ I(G^*)^n) \leq 1$. To see that the above Koszul homology does not vanish, we proceed as follows.

By Proposition~\ref{gamma} we may choose a friendly independent set $\MS$ of $G$ with $|\MS| =s$. Since $\Gamma_{\MS}(G)$ is connected, there exists a spanning tree $T$ of $\Gamma_{\MS}(G)$ with $s-1$ edges $\alpha_1, \ldots, \alpha_{s-1}$. We may assume that $\alpha_1, \ldots, \alpha_{s-1}$ is a leaf order for $T$. In other words, the following conditions are satisfied: (i) $\alpha_1$ has a free vertex in $T$, (ii) for each $j>1$, $ \alpha_j \cap \alpha_i \neq \emptyset$ for some $i<j$ and $\alpha_j$ has a free vertex in the tree $T_j= \alpha_1, \ldots, \alpha_{j}$. Now, we label the vertices of $T$ inductively as follows: $1$ is the free vertex of $\alpha_1$ in $T_1$ and the other vertex in $T_1$ is given the label $2$. Suppose, the labeling of $T_{j-1}$ is defined. Then we give the new vertex of $T_j$, the label $j+1$. Then $\alpha_1=\{1,2\}$ and for each $j>1$, $\alpha_j=\{i_j,j+1\}$, where $\{i_j\}= \alpha_j \cap \alpha_i$.

The following Figure~\ref{two}, gives an example of such a labeling.

\begin{figure}[hbt]\begin{center}\label{two}
\psset{unit=0.6cm}\begin{pspicture}(0.5,1.5)(4.5,3)\psline(2,2)(4,2)\psline(0,1)(2,2) \psline(0,3)(2,2)\psline(4,2)(6,1)\psline(4,2)(6,3)\rput(2,2){$\bullet$}\rput(4,2){$\bullet$}\rput(0,1){$\bullet$}
\rput(0,3){$\bullet$}\rput(6,1){$\bullet$}\rput(6,3){$\bullet$}\rput(2,1.5){$2$}\rput(4,1.5){$4$}\rput(-0.4,0.9){$1$}
\rput(-0.4,3.1){$3$}\rput(6.4,0.9){$5$}\rput(6.4,3.1){$6$}\rput(3,1.6){$\alpha_3$}\rput(1.3,1.1){$\alpha_1$}
\rput(1.3,2.9){$\alpha_2$}\rput(4.9,2.9){$\alpha_4$}\rput(5,1.1){$\alpha_5$}
\end{pspicture}
\end{center}
\caption{}
\label{example}\end{figure}

According to our labeling of $T$, we have $\MS= \{1, \ldots, s\}$. By definition of $\Gamma_{\MS}(G)$, there exists for each edge $\alpha_j=\{i_j,j+1\} \in E(T)$, a vertex $v_j \in \{s+1, \ldots, n\}$ such that $\{i_j,v_{j}\}, \{v_j,j+1\} \in E(G)$. Then $z_j= x_{i_j}x_{v_j} e_{j+1} - x_{j+1} x_{v_j} e_{i_j}$ is a cycle belonging to $Z_1(I(G^*))$.

Furthermore, for each $k \in \{s+1, \ldots, n\}$, we choose $j_k \in \MS$ such that $\{k, j_k\} \in E(G)$. Then $z_k = x_k x_{j_k} f_k - x_k y_k e_{j_k} $ is a cycle belonging to $Z_1(I(G^*))$. This gives $n-s$ such cycles.

Let
\[
c= \prod _{i =1}^s x_i  e_{s+1} \wedge \ldots \wedge e_n \wedge f_1 \wedge \ldots \wedge f_s.
\]
Note that by (\ref{basis}), $c$ is a cycle in $Z_n(S^*/I(G^*))$ whose homology class $[c]$ in $H_n(S^*/I(G^*))$ is non-zero. In particular, $[\partial (c)]$ is non-zero homology class in $H_{n-1}(I(G^*))$.

Let
\[
a=c \wedge  z_1\wedge \ldots \wedge z_{s-1} \wedge z_{s+1} \wedge \ldots \wedge z_n.
\]
Observe that $a \in K_{2n-1}(I(G^*)^{n-1})$. We set $z= \partial (a)$. Then $z \in Z_{2n-2} (I(G^*)^n)$. Indeed, $z = \partial (c) \wedge z_1\wedge \ldots \wedge z_{s-1} \wedge z_{s+1} \wedge \ldots \wedge z_n$, and it has coefficients in $I(G^*)^n$ because $\partial (c)$ and each $z_i$ has coefficients in $I(G^*)$.

 We claim that $[z]$ is a non-zero homology class in $H_{2n-2} (I(G^*)^n)$. To prove the claim, we show that $z$ is not a boundary, that is, there does not exist any $b \in K_{2n-1}(I(G^*)^n)$ such that $z= \partial b$. On contrary, assume that such $b$ exists. Then $\partial (b) = \partial (a) =z$ implies $\partial ( a-b) = 0$ which gives $a-b \in Z_{2n-1}(I(G^*)^{n-1} )$. Then, there exists $b'  \in K_{2n}(S^*)$ such that $\partial (b')= a-b$ where $b' = v e_1 \wedge \ldots \wedge e_n \wedge f_1 \wedge \ldots \wedge f_n$ and $v$ is a monomial in $S^*$ because all cycles under consideration are $\ZZ^{2n}$-graded.

Note that
\[
 w= ( \prod _{i=1}^s x_i  ) (\prod_{k=s+1}^n x_k x_{j_k} )  (\prod_{j=1}^{s-1}  x_{i_j} x_{v_j}) e_2 \wedge \cdots \wedge e_{n} \wedge f_1 \wedge \cdots \wedge f_n
 \]
with $i_j, j_k \in \MS$, $k, v_j \in \{s+1, \ldots, n\}$  is a non-zero term of $a$ and it is not cancelled by any other term of $a$ because the product $ e_2 \wedge \cdots \wedge e_n \wedge f_1 \wedge \cdots \wedge f_n$ appears only once in the expansion of $a$. To see this, consider
\begin{eqnarray}\label{c}
c \wedge z_{s+1} \wedge \cdots \wedge z_n =  ( \prod _{i=1}^s x_i  ) (\prod_{k=s+1}^n x_k x_{j_k} )  e_{s+1} \wedge \cdots \wedge e_{n}  \wedge f_1 \wedge \cdots \wedge f_n.
\end{eqnarray}

Therefore, it follows that the term $w$ appears only once in the expansion of $a$ if $e_2 \wedge \cdots \wedge e_{s-1}$ appears only once in the expansion of $z_1 \wedge \cdots \wedge z_{s-1}$. Now to see this, we write $z_j=g_j - h_j$, where $g_j=x_{i_j}x_{v_j} e_{j+1} $ and $h_j=x_{j+1} x_{v_j} e_{i_j}$ for $j= 1, \ldots, s-1$. Note that for $1 \leq t \leq s-1$ the wedge product $z_1\wedge \cdots \wedge z_{t}$ is a linear combination of $g_{i_1}\wedge \cdots \wedge g_{i_k} \wedge h_{j_1}\wedge \cdots \wedge h_{j_t}$ with $\{i_1, \ldots, i_k\}\cup\{ j_1, \ldots, j_t\}= \{1, \ldots, t\}$. We prove by induction on $t$ that among these terms $g_1 \wedge \cdots \wedge g_t$ is the only term that does not contain $e_1$. For $t=1$, the assertion is trivial. Now let $t>1$ and assume that th sonly term that does not contain $e_1$ is $g_1 \wedge \cdots \wedge g_{t-1}$. Then, the only terms of $z_1 \wedge \cdots \wedge z_t$ which do not contain $e_1$ are either
$g_1 \wedge \cdots \wedge g_{t}$ or $g_1 \wedge \cdots \wedge g_{t-1} \wedge h_t$. However, by the definition of the cycles $z_j$ given in terms of the tree $T$ it follows that $i_t = \{2, \ldots, t-1\}$. Therefore, $g_1 \wedge \cdots \wedge g_{t-1} \wedge h_t=0$.

\medskip
Next, we show that $a  \in K_{2n-1}(I(G^*)^{n-1}) \setminus K_{2n-1} (I(G^*)^n)$. For this it suffice to show that $w   \in K_{2n-1}(I(G^*)^{n-1}) \setminus K_{2n-1} (I(G^*)^n)$, because $w$ is a non-zero term of $a$ which does not cancel against any other term in $a$, as we have just seen. In fact, $ ( \prod _{i=1}^s x_i  ) (\prod_{k=s+1}^n x_k x_{j_k} )  (\prod_{j=1}^{s-1}  x_{i_j} x_{v_j}) $ which is coefficient of $w$, there are $n-1$ terms with indices in $\{s+1, \ldots, n\}$ and $n+s-1$ terms with indices in $\MS=\{1, \ldots, s\}$. Since $\MS$ is a maximal independent set, this implies that $w$ contains a product of exactly $n-1$ generator of $I(G^*)$.

Since all coefficients of $b=a-\partial(b')$ are in $I(G^*)^n$ and the term $w$ which appears in the expansion of $a$ does not have coefficient in $I(G^*)^{n}$, $w$ must be cancelled by some term of $\partial (b')$. This gives
\[
v x_1 e_2 \wedge \cdots  \wedge e_{n} f_1 \wedge \ldots \wedge f_n = ( \prod _{i=1}^s x_i  ) (\prod_{k=s+1}^n x_k x_{j_k} )  (\prod_{j=1}^{s-1}  x_{i_j} x_{v_j}) e_2 \wedge \cdots \wedge e_{n} \wedge f_1 \wedge \cdots \wedge f_n,
\]
which implies
\[
v= ( \prod _{i=2}^s x_i  ) (\prod_{k=s+1}^n x_k x_{j_k} )  (\prod_{j=1}^{s-1}  x_{i_j} x_{v_j})  \in I(G^*)^{n-1}.
\]

The coefficient of the term $v y_n e_1 \wedge \ldots \wedge e_n f_1 \wedge \cdots \wedge f_{n-1} $ which appears in the expansion of $\partial(b')$ does not belong to $I(G^*)^{n}$ because $x_n$ is the only neighbor of $y_n$.  Also the term $v y_n e_1 \wedge \ldots \wedge e_n f_1 \wedge \cdots \wedge f_{n-1} $ is not cancelled by any of the terms of $a$ because from (\ref{c}) we can see that all terms of $a$ contain the wedge product $f_1\wedge \cdots \wedge f_n$ as a factor. Hence, our assumption that $z$ is a boundary leads us to contradiction.

For simplicity of notation, we set $z'_i=z_i$ for $i = 1, \ldots, s-1$ and $z'_i=z_{i+1}$ for $i=s+1, \ldots, n-1$. Note that $\partial(c)\wedge z'_1 \wedge \cdots \wedge z'_{k-1} \in Z_{n+k-2}(I(G^*)^k)$. We claim that this cycle is not a boundary in $K(I(G^*)^k)$. This then implies that $\depth (S^*/I(G^*)^k) \leq n+k-1$ since $H_{n+k-1}(S^*/I(G^*)^k) \iso H_{n+k-2} (I(G^*)^k) \neq 0$.

In order to prove the claim, assume that there exists $b \in K_{n+k-1}(I(G^*)^k)$ such that $\partial (b) = \partial (c) \wedge z'_1 \wedge \cdots \wedge z'_{k-1}$. Let $b'=b \wedge z'_k \wedge \cdots \wedge z'_{n-1} $. Then $b' \in K_{2n-1}(I(G^*)^n)$ and $\partial(b') =  \partial (b) \wedge z'_k \wedge \cdots \wedge z'_{n-1} = z$, a contradiction.
\end{proof}

Our hypothesis of Theorem~\ref{whisker} which requires that $G$ is connected is needed. For example, if we take the disconnected graph $G$ on vertex $[4]$ with edge $\{\{1,2\}, \{3,4\}\}$, then $\depth(S^*/I(G^*)^4) = 2$.

\begin{Remark}{\em
Let $I$ be an arbitrary monomial ideal in $K[x_1, \ldots, x_n, y_1, \ldots, y_n]$. In \cite[Theorem 3.3]{HQ}, it is shown that $\depth (S/I^k) \leq 2n-k+1$ for $k=1, \ldots, r$, where $r< 2n$ is a number depending on $I$. Comparing this result with our Theorem~\ref{whisker}, where $I$ is the edge ideal of a whisker graph, our bound is about half of the bound which is valid for general monomial ideals.}
\end{Remark}

\begin{Corollary}\label{limit}
Let $G$ be a finite simple connected graph on vertex set $[n]$, $G^*$ be the whisker graph of $G$, and $I(G^*)\subset S^*$ be the edge ideal of $G^*$. If $G$ is bipartite, then $\depth (S^*/I(G^*)^k) =1$ for all $k \geq n$, and if $G$ is non-bipartite, then $\depth (S^*/I(G^*)^k) =0$ for all $k\geq n$.
\end{Corollary}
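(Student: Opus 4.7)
The corollary splits on whether $G$ is bipartite, with a common starting point: by Theorem~\ref{whisker} at $k=n$, one has $\depth(S^*/I(G^*)^n)\leq 1$. The cited inputs are the persistence property of associated primes of powers of edge ideals from \cite{CMS} and a monotonicity/stability property of the depth function from \cite{EH2}.

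\textbf{Bipartite case.} Attaching whiskers to a bipartite graph produces a bipartite graph, so $G^*$ is bipartite. By the Simis--Vasconcelos--Villarreal theorem, $I(G^*)$ is then normally torsion-free, so $\Ass(S^*/I(G^*)^k)=\Min(I(G^*))$ for every $k\geq 1$. Because the whisker edges force every minimal vertex cover of $G^*$ to select exactly one element of each pair $\{x_i,y_i\}$, every minimal vertex cover has size $n$, and in particular the maximal graded ideal $\mathfrak{m}_{S^*}$ (of height $2n$) is never an associated prime. Hence $\depth(S^*/I(G^*)^k)\geq 1$ for all $k\geq 1$. Combined with Theorem~\ref{whisker} at $k=n$, this gives $\depth(S^*/I(G^*)^n)=1$. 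For $k>n$ I would invoke from \cite{EH2} that the depth function of a normally torsion-free squarefree monomial ideal is non-increasing; together with the uniform lower bound $\geq 1$, this forces $\depth(S^*/I(G^*)^k)=1$ for all $k\geq n$.

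\textbf{Non-bipartite case.} Now $G$, and hence $G^*$, contains an odd cycle. The key claim is $\mathfrak{m}_{S^*}\in\Ass(S^*/I(G^*)^n)$, which gives $\depth(S^*/I(G^*)^n)=0$; persistence from \cite{CMS} then propagates this to $\depth(S^*/I(G^*)^k)=0$ for all $k\geq n$. To establish the claim, I would extend the $(2n-2)$-cycle $z\in Z_{2n-2}(I(G^*)^n)$ built in the proof of Theorem~\ref{whisker} by wedging with an additional Koszul $1$-cycle $z_0\in Z_1(I(G^*))$ coming from the odd cycle of $G$; a non-vanishing argument modeled on the one in Theorem~\ref{whisker} would then show that $z\wedge z_0$ is a non-boundary $(2n-1)$-cycle in $K(I(G^*)^n)$, so $H_{2n-1}(I(G^*)^n)\iso H_{2n}(S^*/I(G^*)^n)\neq 0$, i.e., depth zero. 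An alternative is to cite from \cite{CMS} a combinatorial description of $\Ass(S/I(G)^k)$ for edge ideals and verify directly that $\mathfrak{m}_{S^*}$ enters at $k=n$ for non-bipartite connected whisker graphs.

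\textbf{Main obstacle.} The bipartite half is essentially bookkeeping around normal torsion-freeness and the monotonicity statement of \cite{EH2}. The genuine difficulty lies in the non-bipartite case: one must convert the odd cycle of $G$ into an explicit Koszul cycle whose wedge with $z$ yields a non-boundary class in $H_{2n-1}(I(G^*)^n)$, or equivalently exhibit a socle element of $S^*/I(G^*)^n$. Ruling out that this wedge product is a boundary is the technical heart of the argument and the step most likely to require delicate combinatorial bookkeeping of monomial coefficients, paralleling the proof of Theorem~\ref{whisker}.
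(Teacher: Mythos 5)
Your bipartite case matches the paper's argument: the lower bound $\depth(S^*/I(G^*)^k)\geq 1$ comes from the Simis--Vasconcelos--Villarreal theorem for bipartite graphs, the equality at $k=n$ from Theorem~\ref{whisker}, and the propagation to $k>n$ from the Eisenbud--Huneke result \cite[Proposition 3.3]{EH2} (the paper phrases the hypothesis as Cohen--Macaulayness of the Rees algebra of $I(G^*)$, citing \cite[Corollary 5.20]{EH}, rather than normal torsion-freeness, but this is the same tool). That half is fine.

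The non-bipartite case is where you have a genuine gap. Your primary route --- wedging the cycle $z\in Z_{2n-2}(I(G^*)^n)$ from Theorem~\ref{whisker} with a further $1$-cycle coming from the odd cycle of $G$ and showing the result is a non-boundary in $K_{2n-1}(I(G^*)^n)$ --- is exactly the step you yourself flag as "the technical heart," and you do not carry it out; as written it is a plan, not a proof, and it is not obvious that the non-cancellation bookkeeping from Theorem~\ref{whisker} transfers (note also that the coefficients of $z\wedge z_0$ land in $I(G^*)^{n+1}$, so one must still argue non-triviality inside $K(I(G^*)^n)$). The paper sidesteps this entirely with a two-citation argument you nearly assemble but do not combine: \cite[Corollary 4.3]{CMS} gives the stabilization $\Ass(S^*/I(G^*)^k)=\Ass(S^*/I(G^*)^{n})$ for all $k\geq n$, while \cite[Corollary 3.4]{CMS} gives $\depth(S^*/I(G^*)^k)=0$, i.e.\ $\mm_{S^*}\in\Ass(S^*/I(G^*)^k)$, for $k\gg 0$. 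Stabilization then forces $\mm_{S^*}\in\Ass(S^*/I(G^*)^k)$ for every $k\geq n$, hence depth $0$ there --- no explicit socle element and no direct verification at $k=n$ is needed. To repair your proof, replace the Koszul construction (and the vague "verify directly that $\mm_{S^*}$ enters at $k=n$") by this combination of the two results from \cite{CMS}.
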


\begin{proof}
Suppose first that $G$ is bipartite. Then $G^*$ is bipartite as well. It follows from \cite[Theorem 5.9]{SVV} that the $\depth (S^*/I(G^*)^k )\geq 1$ for all $k$. Thus our Theorem~\ref{whisker} implies that $\depth(S^*/I(G^*)^n)=1$. On the other hand, since the Rees ring $R(I(G^*))$ of $I(G^*)$ is Cohen-Macaulay  (see for example \cite[Corollary 5.20]{EH}), the result of Eisenbud and Huneke \cite[Proposition 3.3]{EH2}  yields the desired conclusion.

Now let $G$ be a non-bipartite graph. It follows from \cite[Corollary 4.3]{CMS}, applied to our case, that $\Ass (S^*/I(G^*)^k) = \Ass(S^*/I(G^*)^{n})$ for all $k \geq n$. On the other hand, since $G$ is non-bipartite, we know by \cite[Corollary 3.4]{CMS} that  $\depth (S^*/I(G^*)^k) =0$ for $k \gg 0$. This implies that $\depth (S^*/I(G^*)^k) =0$ for all $k\geq n$.
\end{proof}

In general the upper bounds  for the depth of the powers of the edge ideal of a whisker graph given in Theorem~\ref{whisker} are not attained. For example, if $G$ is a 3-cycle then $\depth(S^*/I(G^*))=3 $ and $\depth(S^*/I(G^*)^k)=0$ for all $k \geq 2$. Even if $G$ is bipartite, this bound is not attained. For example, if $G$ is a 4-cycle, then $\depth(S^*/I(G^*))=4$, $\depth(S^*/I(G^*)^2)=3$ and $\depth(S^*/I(G^*)^k) =1$ for $k \geq 3$.

On the other hand, Villarreal showed \cite[Proposition 6.3.7]{V}, that if $G$ is a forest then $\depth(S^*/I(G^*)^2) \geq n-1$. Together with our Theorem~\ref{whisker} it follows that $\depth(S^*/I(G^*)^2) = n-1$.  By using the arguments applied in Villarreal's proof, we now show more generally

\begin{Theorem} \label{tree}
\label{whiskertree}
Let $G$ be a forest on $n$ vertices and let
\[
I=I(G^*)\subset S^*=K[x_1,\dots,x_n,y_1,\ldots,y_n]
\]
be the edge ideal of $G^*$. Then
\[
\depth(S^*/I^k) \geq n-k+1 \quad  \text{for $k=1,\ldots,n$}.
\]
\end{Theorem}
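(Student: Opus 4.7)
The plan is to proceed by induction on $n$. For the base case $n=1$, the module $S^*/(x_1y_1)^k$ has depth $1$, which matches $n-k+1$ for $k=1$ and makes the bound vacuous for $k\ge 2$.

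For the inductive step with $n\ge 2$, I distinguish two cases depending on whether $G$ has an isolated vertex.

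\emph{Case A (isolated vertex).} Write $G = G' \sqcup \{v\}$ with $G'$ a forest on $n-1$ vertices, so that $S^* = S_0[u,w]$ with $u=x_n$, $w=y_n$, and $I = I_0 + (uw)$ for $I_0 = I((G')^*)$. I filter $R_k := S^*/I^k$ by letting $F^r \subseteq R_k$ be the $S^*$-submodule spanned by the classes $[m u^a w^b]$ with $\min(a,b)\ge r$, and a direct check gives
\[
F^r/F^{r+1} \cong (S_0/I_0^{k-r}) \otimes_K K[u,w]/(uw).
\]
Since $\depth K[u,w]/(uw)=1$ and, by the inductive hypothesis together with Corollary~\ref{limit} applied to the bipartite forest $G'$, we have $\depth(S_0/I_0^{k-r})\ge \max\{1, n-k+r\}$, each successive quotient has depth at least $n-k+r+1$. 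Iterated application of the depth lemma to the filtration yields $\depth R_k \ge n-k+1$.

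\emph{Case B (leaf).} Let $x_n$ be a leaf of $G$ with unique neighbor $x_{n-1}$. Exploiting the whisker edge $x_ny_n\in I$, I use the short exact sequence
\[
0 \to S^*/(I^k : x_ny_n) \xrightarrow{\,\cdot x_ny_n\,} S^*/I^k \to S^*/(I^k, x_ny_n) \to 0.
\]
Because $y_n$ occurs only in the single generator $x_ny_n$ of $I$, a monomial analysis gives $(I^k : x_ny_n) = I^{k-1}$, so by nested induction on $k$ the kernel has depth at least $n-k+2$. I then bound the cokernel by a second short exact sequence obtained from multiplication by $y_n$:
\[
0 \to S^*/((I^k, x_ny_n):y_n) \xrightarrow{\,\cdot y_n\,} S^*/(I^k, x_ny_n) \to S^*/(I^k, y_n) \to 0.
\]
Direct computation gives $((I^k, x_ny_n):y_n) = (x_n) + I(H_1)^k$ where $H_1 = G^*\setminus\{\text{edge }x_ny_n\}$, and modulo $x_n$ this ideal reduces to $I((G\setminus x_n)^*)^k$ with $y_n$ as an extra free variable. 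The inductive hypothesis applied to the forest $G\setminus x_n$ on $n-1$ vertices then yields
\[
\depth S^*/((I^k, x_ny_n):y_n) \ge \bigl[(n-1)-k+1\bigr] + 1 = n-k+1.
\]
The remaining cokernel term is $S^*/(I^k, y_n) \cong K[V(H)]/I(H)^k$, where $H = G^*\setminus\{y_n\}$ is a forest on $2n-1$ vertices in which both $y_{n-1}$ and $x_n$ are leaves attached to the common vertex $x_{n-1}$. I establish the auxiliary bound $\depth K[V(H)]/I(H)^k \ge n-k+1$ by a parallel induction on $k$: applying short exact sequences in $H$ by $x_n$ and by $x_{n-1}y_{n-1}$ in turn, the reduction modulo $x_n$ gives the whisker graph $(G\setminus x_n)^*$, while the colon identity $(I(H)^k : x_{n-1}y_{n-1}) = I(H)^{k-1}$ provides the iterative step. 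Combining the two SES bounds via the depth lemma yields $\depth S^*/I^k \ge n-k+1$, closing the induction.

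The hardest step is the auxiliary estimate for $\depth K[V(H)]/I(H)^k$: since $H$ is not itself the whisker graph of a smaller forest, the main inductive hypothesis cannot be invoked directly, and the delicate point is to exploit the twin-leaf structure at $x_{n-1}$ to extract a workable recursion that matches the bound $n-k+1$ rather than $n-k$.
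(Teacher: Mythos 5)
Your Case A (isolated vertex) and the first exact sequence of Case B are sound: the filtration argument for $G=G'\sqcup\{v\}$ is a clean way to handle a degenerate case the paper glosses over, and the identity $(I^k:x_ny_n)=I^{k-1}$ (Morey's lemma, since $y_n$ is a free vertex) together with the sequence $0\to S^*/I^{k-1}\to S^*/I^k\to S^*/(I^k,x_ny_n)\to 0$ is exactly how the paper's proof concludes. The divergence, and the gap, is in how you bound $\depth S^*/(I^k,x_ny_n)$. You colon by the \emph{variable} $y_n$, which forces you to control the cokernel $S^*/(I^k,y_n)\cong K[V(H)]/I(H)^k$, where $H=G^*\setminus\{y_n\}$ is a tree on $2n-1$ vertices that is not a whisker graph, and you need the full-strength bound $\depth K[V(H)]/I(H)^k\geq n-k+1$ for the \emph{same} power $k$. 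You do not prove this; you only sketch a recursion, and that sketch provably cannot deliver the bound. Indeed, in your auxiliary sequence the reduction modulo $x_n$ produces $K[V((G\setminus x_n)^*)]/I((G\setminus x_n)^*)^k$, whose depth is exactly $(n-1)-k+1=n-k$ when $G\setminus x_n$ is a tree (by the very theorem under proof combined with Theorem~\ref{whisker}); the depth lemma applied to a short exact sequence whose cokernel has depth $n-k$ can never certify $n-k+1$ for the middle term. Moreover the claim itself is false at $k=1$: for $G$ a single edge, $H$ is the path on three vertices, $\depth K[x_1,x_2,y_1]/(x_1y_1,x_1x_2)=1<2=n-k+1$, since $H$ is not unmixed. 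So your "parallel induction on $k$" has neither a valid base case nor a valid inductive step as described.

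The paper avoids this trap by never passing to $(I^k,y_n)$. Instead it colons $(I^k,x_ny_n)=((J,x_{n-1}x_n)^k,x_ny_n)$ by the \emph{edge} $x_{n-1}x_n$, and Morey's lemma drops the exponent: the kernel becomes $S^*/\bigl((J,x_{n-1}x_n)^{k-1}+(y_n)\bigr)$, so the auxiliary estimate actually required is $\depth S^*/(J,x_{n-1}x_n)^{k-1}\geq n-k+2$, i.e.\ the bound $n-j+1$ for the $(j=k-1)$-st power of $I(H)$ (with $y_n$ a free variable) — one power lower than what your route demands, and exactly what an induction on $k$ starting from Villarreal's case $k=2$ can sustain. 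The remaining cokernel $S^*/(J^k,x_{n-1}x_n,x_ny_n)$ is then handled by two further colon sequences by $x_n$ and $x_ny_n$, each landing on genuine whisker forests covered by the main induction. To repair your argument you would need either to replace your second exact sequence by the paper's edge-colon decomposition, or to supply an entirely different proof of the auxiliary bound for $H$ — and the evidence above suggests that bound is the wrong target.
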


\begin{proof}
We show this by induction on $k + n$. If $k+n=1$, then either $k=1$ or $n=1$. If $k=1$, then $\depth(S^*/I^k)=n$ since $I$ is a Cohen-Macaulay of height $n$ and for $n=1$ the assertion is trivial.

Let $x_n$ be a free vertex of the forest $G$ with $\{x_{n-1}, x_n\} \in E(G)$. Following the notation used in the proof  \cite[Proposition 6.3.7]{V}, we denote by $J$ the ideal which is obtained by $I$ by substituting $x_n=0$ and by $L$ the ideal which obtained from $J$ by substituting $x_{n-1} = 0$.  Furthermore, we set $K=(J^k, x_{n-1}x_n, x_ny_n)$. Since $(J^k, x_{n-1}x_n):x_n = (L^k, x_{n-1})$, we obtain the exact sequence
\begin{eqnarray*}\label{depth1}
0 \rightarrow S^*/ (L^k, x_{n-1}) \rightarrow S^* / ( {J^k, x_{n-1}x_n} )\rightarrow S^* / (J^k, x_n) \rightarrow 0 .
\end{eqnarray*}

Since $J$ is edge of a whisker forest on $n-1$ vertices and $L$ is the edge ideal of the whisker forest on $n-2$ vertices, our induction hypothesis implies that
\[
\depth (S^*/ (L^k, x_{n-1})) \geq n-k+2  \text{ and } \depth (S^* / (J^k, x_n)) \geq n-k+1.
\]
This implies that

\begin{eqnarray}\label{depth2}
\depth (S^* / (J^k, x_{n-1}x_n )) \geq n-k+1.
\end{eqnarray}

Since $(J^k, x_{n-1}x_n): x_ny_n = (L^k, x_{n-1})$, we obtain the exact sequence

\begin{eqnarray}\label{depth3}
0 \rightarrow S^*/ (L^k, x_{n-1}) \rightarrow S^* / ( {J^k, x_{n-1}x_n} )\rightarrow S^* / K \rightarrow 0 .
\end{eqnarray}

From (\ref{depth2}) and (\ref{depth3}), we obtain

\begin{eqnarray}\label{depth4}
\depth (S^* / K) \geq n-k+1.
\end{eqnarray}

Note that $(I^k, x_n y_n) = (J, x_{n-1}x_n)^k + (x_n y_n)$. Therefore, $(I^k , x_n y_n) : x_{n-1} x_n = (J, x_{n-1} x_n)^k : x_{n-1}x_n + (y_n)$. Since $(J, x_{n-1} x_n)$ is the edge ideal of the graph for which $\{n-1, n\}$ is an edge with free vertex $n$, it follows by result of Morey \cite[Lemma 2.10]{M} that  $(J, x_{n-1} x_n)^k : x_{n-1}x_n = (J, x_{n-1} x_n)^{k-1}$. Therefore, altogether we have that $(I^k, x_ny_n): x_{n-1}x_n = (J, x_{n-1}x_n)^{k-1} + (y_n)$. Thus, we obtain the exact sequence

\begin{eqnarray}\label{depth5}
0 \rightarrow S^*/ ((J, x_{n-1x_n})^{k-1} + (y_n) ) \rightarrow S^* / ( {I^k, x_n y_n} )\rightarrow S^* / K \rightarrow 0 .
\end{eqnarray}

We claim that for $k=2, \ldots, n$ we have $\depth (S^*/ (J, x_{n-1}x_n)^{k-1} ) \geq n-k+2$. Therefore, (\ref{depth4}) and (\ref{depth5}) implies

\begin{eqnarray}\label{depth6}
\depth (S^* / ({I^k, x_n y_n}) ) \geq n-k+1.
\end{eqnarray}

By using (\ref{depth6}) and our induction hypothesis, the exact sequence
\begin{eqnarray*}
0 \rightarrow S^*/ I^{k-1} \rightarrow S^* / I^k \rightarrow S^* / (I^k, x_n y_n) \rightarrow 0 .
\end{eqnarray*}
yields that $\depth (S^*/ I^k) \geq n-k+1$ and proves our theorem.

It remains to prove the claim. For that we use induction on $k$. For $k=2$, this inequality is shown in the proof of \cite[Proposition 6.3.7]{V}. Suppose that $k>2$. Since $(J, x_{n-1}x_n)$ is the edge ideal of a tree with free vertex $n$, we may apply \cite[Lemma 2.10]{M}, and obtain the exact sequence
\begin{eqnarray*}\label{depth7}
0 \rightarrow S^*/ (J, x_{n-1x_n})^{k-2}  \rightarrow S^* / ( J,  x_{n-1} x_n )^{k-1} \rightarrow S^* / (J^{k-1}, x_{n-1}x_n) \rightarrow 0 .
\end{eqnarray*}

 By our induction hypothesis, $\depth ( S^*/ (J, x_{n-1x_n})^{k-2} ) \geq n-k+3$ and (\ref{depth2}) applied for $k-1$ yields $\depth ( S^*/ (J, x_{n-1x_n})^{k-2}  ) \geq n-k+2$. Therefore, it follows that $\depth (S^* / ( J,  x_{n-1} x_n )^{k-1} ) \geq n-k+2$, as desired.
\end{proof}

By combining Theorem~\ref{whisker} and Theorem~\ref{tree}, we obtain
\begin{Corollary}
Let $G$ be a tree. Then
\[
\depth(S^*/I(G^*)^k) = n-k+1 \quad  \text{for $k=1,\ldots,n$}.
\]
\end{Corollary}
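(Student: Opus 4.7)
The plan is to simply combine the two main theorems already established in the paper, using that a tree is simultaneously a connected graph and a forest. Since the bounds from Theorem~\ref{whisker} and Theorem~\ref{tree} match exactly, there is no remaining analytic content; the only thing to verify is that both hypotheses apply.

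First, I would observe that if $G$ is a tree on the vertex set $[n]$, then in particular $G$ is a finite simple connected graph on $[n]$, so Theorem~\ref{whisker} applies and yields the upper bound
\[
\depth(S^*/I(G^*)^k) \leq n-k+1 \quad \text{for } k=1,\ldots,n.
\]
Next, since every tree is a forest, Theorem~\ref{tree} applies and yields the matching lower bound
\[
\depth(S^*/I(G^*)^k) \geq n-k+1 \quad \text{for } k=1,\ldots,n.
\]
Combining the two inequalities gives equality for each $k$ in the stated range.

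There is no real obstacle here; the corollary is a formal consequence of the two preceding theorems, and the only subtlety worth pointing out to the reader is that being a tree is precisely the intersection of the two hypotheses \textemph{connected} and \textemph{forest} that are required by Theorems~\ref{whisker} and~\ref{tree} respectively. If I wanted to make the proof slightly more informative, I could also remark that the fact $I(G^*)$ is Cohen--Macaulay of height $n$ (as used in the proof of Theorem~\ref{tree}) already recovers the $k=1$ case directly, consistent with the formula.
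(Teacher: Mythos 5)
Your proof is correct and is exactly the argument the paper intends: the corollary is stated as an immediate combination of the upper bound from Theorem~\ref{whisker} (which applies since a tree is connected) and the lower bound from Theorem~\ref{tree} (which applies since a tree is a forest). Nothing further is needed.
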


More generally, we expect that if $G$ is a forest with $m$ connected components, then
\[
\depth(S^*/I(G^*)^k) = \left\{ \begin{array}{ll}
          n-k+1, &  \text{if $k\leq n-m+1$}, \\
         m, &\text{if  $k \geq n-m+1$}.        \end{array} \right.
\]

{}
 \end{document}